\def\hpt{${\cal HPT}$ }
\def\hpp{${\cal HPP}$ }
\def\hps{${\cal HPS}$ }
\renewcommand{\vec}[1]{\mathbf{#1}}
\newcommand*\rfrac[2]{{}^{#1}\!/_{#2}}
\newtheorem{theorem}{Theorem}
\newtheorem{remark}{Remark}
\def\cim{Hyperbolic Pascal simplex}
\title{\bf \cim 
}
\author{ L\'aszl\'o N\'emeth\footnote{University of Sopron,  Institute of Mathematics, Hungary. \textit{nemeth.laszlo@uni-sopron.hu}}}
\date{}
\begin{document}

\maketitle \thispagestyle{empty}

\begin{abstract}
In this article we introduce a new geometric object called hyperbolic Pascal simplex. This new object is presented by the regular hypercube mosaic  in the 4-dimensional hyperbolic space. The definition of the hyperbolic Pascal simplex, whose hyperfaces are hyperbolic Pascal pyramids and faces are hyperbolic Pascals triangles, is a natural generalization of the definition of the hyperbolic Pascal triangle and pyramid. We describe the growing of the hyperbolic Pascal simplex considering the numbers and the values of the elements. Further figures illustrate the stepping from a level to the next one.   \\[1mm]
{\em Key Words: Pascal simplex, hypercube honeycomb, regular hypercube mosaic in 4-dimensional hyperbolic space.}\\
{\em MSC code: 05B45, 52C22, 11B99.}    
\
\end{abstract}


\section{Introduction}\label{sec:introduction} 

There are several generalizations of  Pascal's arithmetical triangle (see, for instance \cite{BNSz,BSz,SV}),  there is among them the family of hyperbolic Pascal triangles. This  new type is based on the hyperbolic regular mosaics denoted by Schl\"afli's symbol $\{p,q\}$, where $(p-2)(q-2)>4$ (see \cite{C}). Each regular mosaic induces a so-called hyperbolic Pascal triangle (\hpt\!), it is detailed only for regular squared mosaics $\{4,q\}$  in \cite{BNSz,NSz_alter,NSz2,NSz_Power,N_fibo}. Obviously, the classical Pascal's triangle is connected to the Euclidean square mosaic $\{4,4\}$.

The 3-dimensional analogue of  Pascal's original triangle is the well-known Pascal's pyramid (or more precisely Pascal's tetrahedron). 
Its levels are triangles and the numbers along the three edges of the $n^{\text{th}}$ level are the numbers of the $n^{\text{th}}$ lines of  Pascal's triangle. 
Each number inside in any levels is the sum of the three adjacent numbers on the level above \cite{ANVI,B,har}.
In \cite{NL_hyppyr} a 3-dimensional variation, the hyperbolic Pascal (cube) pyramid (\hpp\!\!) is presented, which is based on the hyperbolic regular cube mosaic $\{4,3,5\}$. This object can also be considered as the a hyperbolic variation of the well-known (Euclidean) Pascal's pyramid which is built on the Euclidean regular cube mosaic $\{4,3,4\}$. In a special space, in $\mathbf{H}^2\!\times\!\mathbf{R}$, there is an interesting generalization of \hpp (see \cite{N_pyrH2R}).

In the 4-dimensional space the natural generalization of the square and the cube is the 4-dimensional hypercube. Coxeter \cite{C} showed that the hypercube generates regular mosaics not only in the Euclidean, but also in the hyperbolic 4-dimensional spaces. They are the mosaics $\{4,3,3,4\}$ and $\{4,3,3,5\}$, respectively. The 4-dimensional Euclidean variation, Pascal's simplex can be based on the Euclidean hypercube mosaic $\{4,3,3,4\}$.

In this article we present a 4-dimensional hyperbolic Pascal simplex (\hps\!\!) built on the mosaic $\{4,3,3,5\}$. The method of the discussion is similar to the discussion of the hyperbolic Pascal pyramid's (\cite{NL_hyppyr}) case and we apply some of its results. (We keep the usual notation and write the hyperbolic Pascal pyramid without an ``apostrophe", similarly to the case of  Pascal's classical triangle and the hyperbolic Pascal triangle.) 
We strictly follow the definitions and denotations of the hyperbolic Pascal triangle and pyramid in \cite{BNSz} and \cite{NL_hyppyr} (see also \cite{N_pyrH2R}).
We mention that in the 4-dimensional Euclidean and hyperbolic spaces there are 2 and 4 other regular mosaics with bounded cells and vertex figures, but we do not examine them in this article. We suppose, that their examinations can be similar due to the generalization of Pascal triangle.

\section{Construction of the hyperbolic Pascal simplex}

The 4-dimensional hyperbolic hypercube mosaic $\{4,3,3,5\}$ consists of hypercubes with Schl\"afli's symbol $\{4,3,3\}$. 
The vertex figures of the mosaic are 600-cells, $\{3,3,5\}$. The hypercube is well-known, but the 600-cell is not so much, thus we give some details of it. 
The cells of a 600-cell are tetrahedra ($\{3,3\}$), the (2-dimensional) faces are triangles and the neighbouring vertices of a vertex of a 600-cell form an icosahedron ($\{3,5\}$), so a vertex lies on 12 edges, 30 faces and 20 cells. Each edge is on 5 faces and 5 cells and each face connects 2 cells. 
The numbers of the vertices, edges, (2-dimensional) faces and cells of a 600-cell are 120, 720, 1200 and 600, respectively. 

Considering an arbitrary vertex $V$ of the mosaic, the number of hypercubes around $V$ is 600, as many as the number of the cells of the 600-cell, and the number of the mosaic edges from $V$ (degree of $V$) is 120, as many as the number of the vertices of the 600-cell. There are 20 hypercubes around a mosaic edge as there are 20 faces of an icosahedron, so there are 20 tetrahedra around a vertex on the 600-cell. See some other details of the hyperbolic hypercube mosaic in \cite{C,N_hypercube}.

Let us consider an arbitrary vertex of the mosaic, say vertex $V_0$, as the base vertex of \hps\!\!. We sign it by 1 as well and let us sign all the vertices of the mosaic by the numbers of the shortest paths from the considered vertex to the base vertex along mosaic edges. 
The shortest paths imply a digraph directed from $V_0$.
We define a convex part ${\cal P}$ of the mosaic the following way. First we take a hypercube with vertex $V_0$ and we consider the vertex $V_1$ opposite of $V_0$ in the hypercube. It is the furthest vertex of the hypercube from $V_0$ and it has the largest sign among the vertices, namely $24$.
Second we take the new hypercubes of the mosaic containing vertex $V_1$ and their vertices which have the largest signs in each hypercubes. Now we take again the hypercubes around these vertices and their vertices with the largest signs, and so on. Continuing this algorithm limitless, the set of these hypercubes gives ${\cal P}$.  
Finally, the vertices (labelled above)  and the edges (directed above) of ${\cal P}$ form an infinite digraph similar to an infinite simplex with a finite base vertex $V_0$. We name it hyperbolic Pascal simplex (\hps\!\!).  Obviously, the signs of the vertices are the sums of the signs of the incoming edges.

Let level~0 be the vertex $V_0$. Level~$n$ consists of the vertices of \hps whose edge-distances from $V_0$ are $n$-edge (the distance of the shortest path along the edges of ${\cal P}$ is $n$).   The shapes of the levels are tetrahedra.  
It is clear, the 3-dimensional and 2-dimensional faces on the outer boundaries are hyperbolic Pascal pyramids and hyperbolic Pascal triangles based on mosaics $\{4,5,4\}$ and $\{4,5\}$, respectively. The faces and edges of the $n^{\text{th}}$ level are the $n^{\text{th}}$ levels of \hpp and $n^{\text{th}}$ lines of \hpt\!\!, respectively. 
Figures~\ref{fig:layer0to2}, \ref{fig:layer2to3} and \ref{fig:layer3to4} show the growing from a level to the next one in case of some lower levels.
As each hyperface of \hps is \hpp\!\!, there are six types of vertices: $1$, $A$, $B$ (from the 2-dimensional faces, which are hyperbolic Pascal triangles -- see \cite{BNSz}) and $C$, $D$, $E$ in the hyperfaces, corresponding to \cite{NL_hyppyr}. The properties of the growing come partly from the examination of \hpt and \hpp\!\!. 
We denote them, respectively, by grey, red, cyan, blue, green and yellow colours in the figures.  
The colours of the other different types of the vertices inside \hps are also different. (See the definitions later. The first vertex inside of the simplex appears in level 4. It is the biggest, the purple sphere -- vertex type $F$ -- in the centre of Figure~\ref{fig:layer3to4}.) The numbers without colouring refer to vertices in the lower level in every figure. 
The graphs, growing from a level to the new one, contain graph-cycles with six nodes. These graph-cycles figure the convex hulls of the parallel projections of the cubes from the mosaic, where the direction of the projection is not parallel to any edges of the cubes. Moreover, the rhombic-dodecahedra are the 3-dimensional shadows of the hypercubes.  

\begin{figure}[thb!]
	\centering
	\includegraphics[scale=0.56]{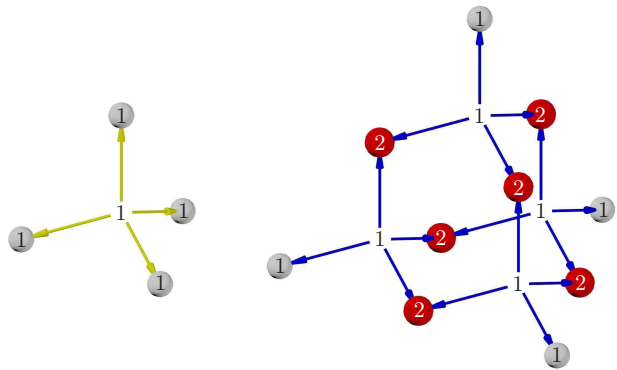}
	\caption{Connections between layers zero to one and one to two in \hps}
	\label{fig:layer0to2}
\end{figure}

\begin{figure}[htb!]
	\centering
	\includegraphics[scale=0.56]{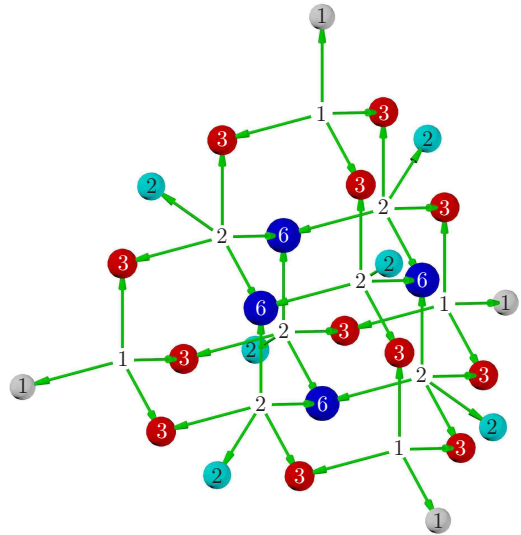}
	\caption{Connection between layers two and three in \hps}
	\label{fig:layer2to3}
\end{figure}

\begin{figure}[htb!]
	\centering
	\includegraphics[scale=0.75]{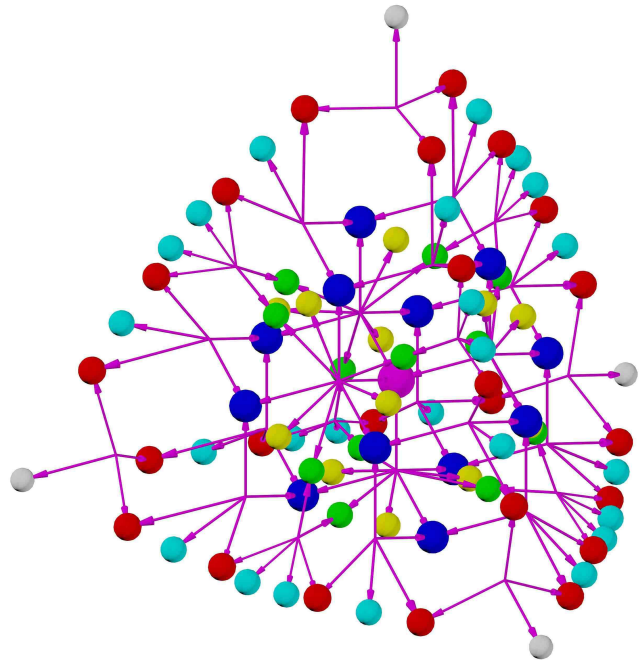}
	\caption{Connection between layers three and four in \hps}
	\label{fig:layer3to4}
\end{figure}

In the following we describe the method of growing of the hyperbolic Pascal simplex and we give the sum of the paths connecting vertex $V_0$ to level $n$.

\section{Growing of the hyperbolic Pascal simplex}

In the classical Pascal's simplex the number of the elements on level $n$ is $(n+1)(n+2)/2$ and its growing  from level $n$ to level $n+1$ is $n+2$. In this section we give the growing from level to level in the case of the hyperbolic Pascal simplex. 

The growing process on the outer 2- and 3-dimensional faces of \hps mostly comes from \cite{NL_hyppyr}. 
Figure~\ref{fig:gowing4d_edge} shows the growing of vertices types $1$, $A$ and $B$. For example, the centre figure illustrates that each vertex $A$ has two incoming edges, which could be types $1$, $A$ or $B$, and five outgoing edges, precisely two $A$, one $B$ and two $C$. The degrees of $1$, $A$ and $B$ are 5, 7 and 7, respectively.

\begin{figure}[ht!]
	\centering
	\includegraphics[scale=0.99]{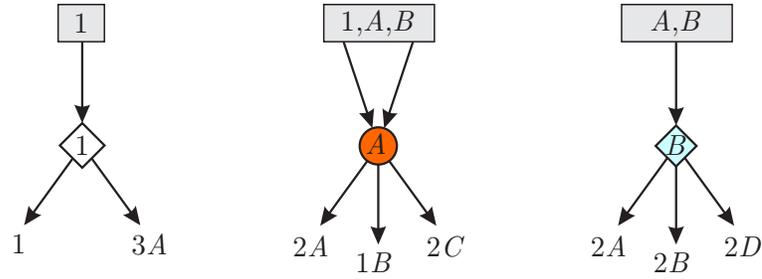}
	\caption{Growing of the 2-dimensional faces in \hps}
	\label{fig:gowing4d_edge}
\end{figure}

The vertices $C$, $D$ and $E$ are inside the 3-dimensional faces, which are \hpp\!\!-s and they have outgoing edges into the inside of \hps\!\!. (There are 12 hypercubes around these edges in the mosaic.) Let us denote these new types of vertices by $F$, $G$ and $H$ (see Figure~\ref{fig:gowing4d_face}).  So, the degrees of  vertices $C$, $D$ and $E$ are 13. (Compare Figures~\ref{fig:layer0to2}--\ref{fig:layer3to4} with Figures~\ref{fig:gowing4d_edge}--\ref{fig:gowing4d_face}.)

\begin{figure}[ht!]
	\centering
	\includegraphics[scale=0.99]{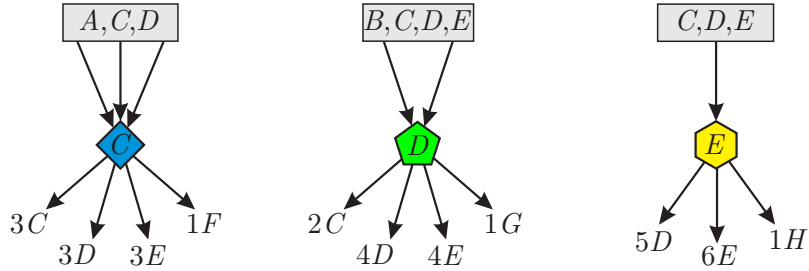}
	\caption{Growing of the 3-dimensional faces in  \hps}
	\label{fig:gowing4d_face}
\end{figure}

Now we examine the vertices inside \hps\!\!.   
All these vertices have four, three, two or one incoming edges from the previous level. We denote them by types $F$, $G$, $H$ and $K$, respectively. (In Figure~\ref{fig:layer3to4} vertex $F$ is coloured  purple.) Some vertices $F$, $G$ and $H$ have one incoming edge from vertices $C$, $D$ and $E$, respectively, and all vertices types $F$  connect only to inside vertices of \hps\!\!. 

In the following we give the number of the outgoing edges of these vertices using the classification of the vertices of the vertex figures with a 4-dimensional generalized method applied in \cite{NL_hyppyr}.
According to the previous section, the degrees of all these vertices are 120. 
Firstly, we consider a vertex type $K$ in level $i$ $(i\geq 5)$. In the mosaic, the neighbouring vertices to $K_i$  form a vertex figure, a 600-cell, whose all 120 vertices have mosaic edges joining to $K_i$. Among them there is only one vertex in level $i-1$, we denote it by $W_{i-1}$ (see Figure~\ref{fig:gowing4d_vertex_K}). The other vertices are in level $i+1$  and with the classification of them we can give the numbers of different types of vertices which imply the outgoing mosaic edges from $K_i$. (The mosaic edge $W_{i-1}K_i$ is an incoming edge to $K_i$.) If a vertex of a 600-cell has one common edge with  $W_{i-1}$, then it has two incoming mosaic edges. (We mention that the edges of the vertex figure are not the edges of the mosaic.) 
In Figure~\ref{fig:gowing4d_vertex_K} vertex $H_{i+1}$ connects to $W_{i-1}$, so there is a $W_i$ mosaic vertex (not in the vertex figure) which has common mosaic edges not only with $W_{i-1}$ but also with $H_{i+1}$. This way  $H_{i+1}$ has two incoming edges from $W_i$ and $K_i$ from level $i$, thus its type is $H$. All the vertices of the 600-cell which are adjacent to $H_{i+1}$ form an icosahedron and the type of all its vertices are $H$ (orange regular 9-gons in the figures). The type of the other 107 vertices of the 600-cell is $K$, as their incoming edges come from the considered vertex $K_i$.     

\begin{figure}[htb!]
	\centering
	\includegraphics[scale=0.99]{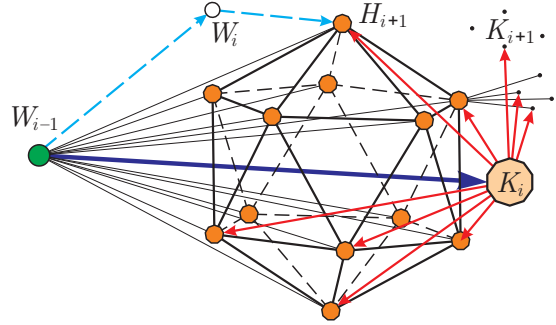}
	\caption{Growing method around vertex type $K$}
	\label{fig:gowing4d_vertex_K}
\end{figure}

Secondly, we take a vertex type $H$ in level $i$ $(i\geq 5)$, let it be $H_i$ in Figure~\ref{fig:gowing4d_vertex_H}.  $H_i$ has two incoming edges from level $i-1$, so there are two vertices on the same edge of the vertex figure in level $i-1$. We denote them by $W_{i-1}$ again. (Generally, $W$ denotes a vertex, whose type is not known or not important to know, while the index shows the level of the vertex.) Now we have to classify again the vertices of the vertex figure around the considered $H_i$ according to its neighbouring  vertices $W_{i-1}$. 
As there are 5 vertices in a 600-cell which are connected to both vertices by an edge, there are 5 vertices type $G$, they have 3 incoming edges from level $i$. (Recall, the mosaic edges and the icosahedron edges are different.) These 5 vertices are the intersections of the icosahedra connecting to the two $W_{i-1}$-s. 
In Figure~\ref{fig:gowing4d_vertex_H} we drew the icosahedron connecting to the left vertex $W_{i-1}$ and the vertices $G_{i+1}$ are denoted by blue regular 8-gones. The other $2\cdot 6$ vertices of the icosahedra are one-edge-long far from one of the $W_{i-1}$-s, so their types are $H$. The rest 101 vertices of the 600-cell are types $K$. On the right-hand side of the figure we highlighted the vertices $W_{i-1}$ and  $G_{i+1}$ and their subgraph structure in the 600-cell.

\begin{figure}[htb!]
	\centering
	\includegraphics[scale=0.99]{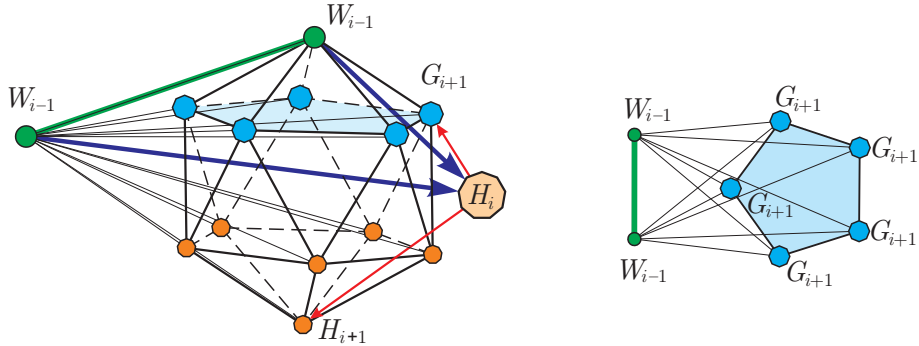}
	\caption{Growing method around vertex type $H$}
	\label{fig:gowing4d_vertex_H}
\end{figure}

Thirdly, we classify the vertices of the 600-cell in the case of a vertex $G_i$ $(i\geq 5)$. We mark a face of the 600-cell, its vertices are in level $i-1$ and from them start the mosaic's incoming edges into $G_i$. There are only two vertices type $F$, which are neighbouring to all the three $W_{i-1}$-s. Thus both $F_{i+1}$ have 4 incoming mosaic edges and Figure~\ref{fig:gowing4d_vertex_G} shows them with yellow regular 7-gones. For all the three edges of triangle $W_{i-1}W_{i-1}W_{i-1}$ connect  pentagons, so that the vertices are one-edge-long far from the endpoints of the edges. The type of their  two common vertices are $F$, the other $3\cdot 2$ vertices are $G$ (see on the right-hand side in Figure~\ref{fig:gowing4d_vertex_G}). The other $3\cdot 4$ vertices of the three icosahedra connecting to all $W_{i-1}$ are $H$. The number of the rest vertices of the vertex figure is 97, their types are $K$.

\begin{figure}[htb!]
	\centering
	\includegraphics[scale=0.99]{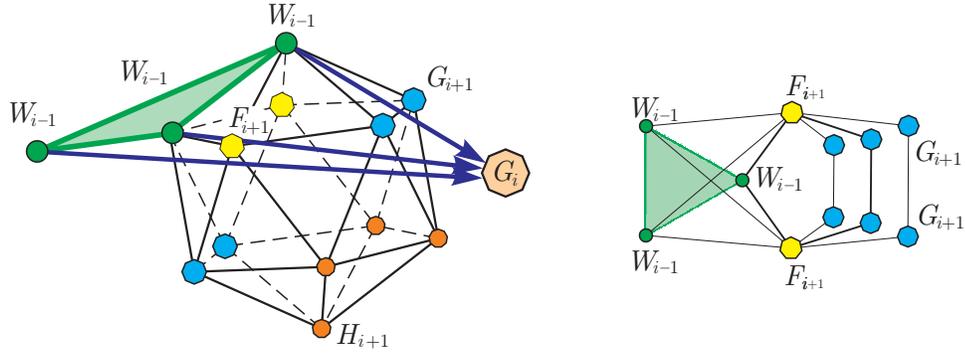}
	\caption{Growing method around vertex type $G$}
	\label{fig:gowing4d_vertex_G}
\end{figure}

Fourthly, we classify the vertices of the 600-cell in the case when the vertex is  $F_i$ $(i\geq 4)$. Now a tetrahedron is in level $i-1$ and from its vertices $W_{i-1}$ start the incoming mosaic edges into the considered $F_i$. For all four faces of tetrahedron $W_{i-1}W_{i-1}W_{i-1}W_{i-1}$ connect a vertex $F_{i+1}$, so that they are joining to three $W_{i-1}$, so they have 4 incoming mosaic edges from level $i$ (see Figure~\ref{fig:gowing4d_vertex_F}). That way along all the six edges of the tetrahedron there are 4 neighbouring vertices to the end of the edge. The rest, the fifth vertices are type $G$, the number of them is $6\cdot 1$. (There are 6 edges of a tetrahedron.) The other $4\cdot 3$ vertices of the four icosahedra connecting to vertices $W_{i-1}$ are type $H$. The rest vertices of the 600-cell are type $K$, they are 94 altogether.  

\begin{figure}[htb!]
	\centering
	\includegraphics[scale=0.99]{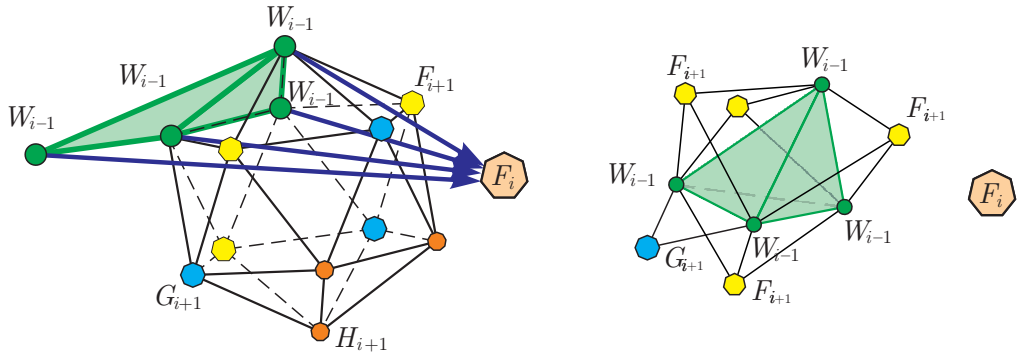}
	\caption{Growing method around vertex type $F$}
	\label{fig:gowing4d_vertex_F}
\end{figure}

Finally, in Figure~\ref{fig:gowing4d_simlex} the growing method is presented in the case of the inner vertices of \hps as a summing-up of discussions and figures. For example, the first graph shows that each vertex $F$ has four incoming edges, which could be types $C$, $F$ or $G$, and $120-4$ outgoing edges with 4 pieces $F$, 6 pieces of $G$, 12 pieces of $H$ and 94 pieces of $K$.  
\begin{figure}[h!]
	\centering
	\includegraphics[scale=0.99]{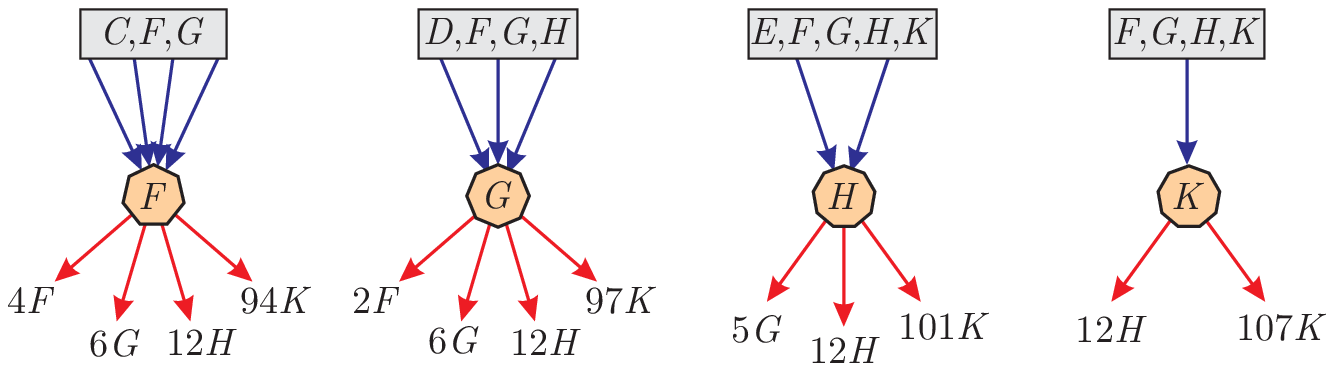}
	\caption{Growing inside \hps}
	\label{fig:gowing4d_simlex}
\end{figure}

Figure \ref{fig:gowing4d_edge}, \ref{fig:gowing4d_face} and \ref{fig:gowing4d_simlex} describe the growing method of \hps\!\!, but for summarising we have to consider the vertices in level $i+1$ without multiplicity. For example the new vertices $F$ are counted for all the vertices $C$, $F$ and $G$. So, for the correct calculation we correspond only one third of them to the examined vertices $C$, $F$ and $G$. Similarly for the cases of all vertices we eliminate the multiplicity.

Let us denote the sums of vertices types $A$, $B$, $C$,  $D$,  $E$, $F$, $G$, $H$, $K$ and $1$ on level $i$  by   $a_i$, $b_i$, $c_i$, $d_i$, $e_i$, $f_i$, $g_i$, $h_i$, $k_i$ and $v_i$, respectively.

Summarising the details $(i\geq4)$ and calculating the numbers of vertices in some lower levels $(i<4)$ in Table~\ref{table:typeof_vertices4d}, we proved the Theorem~\ref{th:4d_growing_type}.

\begin{theorem}\label{th:4d_growing_type}
	The growing of the numbers of the different types of  vertices in \hps is described by the system of linear homogeneous recurrence sequences  $(n\geq1)$
	\begin{equation}\label{eq:4d_seq01v}
	\begin{aligned}
	a_{n+1}&= \frac12 \left(2a_n+2b_n+3v_n\right),\\
	b_{n+1}&=  a_n+2b_n,\\
	c_{n+1}&= \frac13 \left(2a_n+3c_n+2d_n\right),\\
	d_{n+1}&= \frac12 \left(2b_n+3c_n+4d_n+5e_n\right),\\
	e_{n+1}&= 3c_n+4d_n+6e_n,\\
	f_{n+1}&= \frac14 \left(c_n+4f_n+2g_n\right),\\
	g_{n+1}&= \frac13 \left(d_n+6f_n+6g_n+5h_n\right),\\
	h_{n+1}&= \frac12 \left(e_n+12f_n+12g_n+12h_n+12k_n\right),\\
	k_{n+1}&=  94f_n+97g_n+101h_n +107k_n \\
	v_{n+1}&=  v_n,
	\end{aligned}
	\end{equation}
	with zero and $v_1=4$ initial values.
\end{theorem}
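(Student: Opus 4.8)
The proof amounts to a systematic double count of the edges joining two consecutive levels, so the plan is to record the relevant local data and then assemble the ten equations of~\eqref{eq:4d_seq01v}. Two preliminary facts are needed. First, by the construction of $\cal P$ as a shortest-path digraph directed away from $V_0$, every directed edge of \hps goes from some level $i$ to level $i+1$; hence all incoming edges of a vertex on level $n+1$ originate on level $n$. Second, each of the ten types has a well defined in-degree $\iota$: a vertex of value $1$ (a corner of the tetrahedral level) has one incoming edge, coming from the corner of the previous level; $\iota(A)=2$ and $\iota(B)=1$ as on the boundary hyperbolic Pascal triangles $\{4,5\}$ (see Figure~\ref{fig:gowing4d_edge} and~\cite{BNSz}); $\iota(C)=3$, $\iota(D)=2$, $\iota(E)=1$ as on the boundary hyperbolic Pascal pyramids (see Figure~\ref{fig:gowing4d_face} and~\cite{NL_hyppyr}); and $\iota(F)=4$, $\iota(G)=3$, $\iota(H)=2$, $\iota(K)=1$ by the very definition of the interior types in the previous section.

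Next I would read off, for each ordered pair of types $(X,Y)$, the number $m_{XY}$ of edges from a single type-$X$ vertex to type-$Y$ vertices on the next level; this number does not depend on the chosen vertex of type $X$. For $X\in\{1,A,B\}$ and $X\in\{C,D,E\}$ these out-degree profiles are the ones displayed in Figures~\ref{fig:gowing4d_edge} and~\ref{fig:gowing4d_face} --- the profiles of $C,D,E$ being the growing inside the boundary pyramids taken from~\cite{NL_hyppyr} together with the single extra edge that each of $C,D,E$ sends into the interior, into a vertex of type $F,G,H$ respectively. For $X\in\{F,G,H,K\}$ they come from the classification of the $120$ vertices of the $600$-cell vertex figure carried out in the previous section around a vertex of type $K$, $H$, $G$, $F$ (Figures~\ref{fig:gowing4d_vertex_K}--\ref{fig:gowing4d_vertex_F}) and are summarised in Figure~\ref{fig:gowing4d_simlex}; for instance $m_{FF}=4,\ m_{FG}=6,\ m_{FH}=12,\ m_{FK}=94$ and $m_{GF}=2,\ m_{GG}=6,\ m_{GH}=12,\ m_{GK}=97$, each profile having total $120-\iota(X)$ as it must.

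Writing $x_n$ for the number of type-$X$ vertices on level $n$, the number of edges from level $n$ to the type-$Y$ vertices of level $n+1$ equals $\iota(Y)\,y_{n+1}$ counted from the target side and $\sum_X m_{XY}\,x_n$ counted from the source side; equating these and dividing by $\iota(Y)$ gives exactly the equation for $y_{n+1}$ in~\eqref{eq:4d_seq01v}, and $v_{n+1}=v_n$ is the special case $\iota(1)=m_{1,1}=1$. In particular the factors $\tfrac12,\tfrac13,\tfrac14$ are the reciprocals of $\iota(A)=\iota(D)=\iota(H)$, of $\iota(C)=\iota(G)$ and of $\iota(F)$, and the required divisibilities need no separate proof because the left-hand sides count vertices. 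This is where the ``multiplicity'' remark of the previous section is used: a new vertex on level $n+1$ is charged once per incoming edge rather than once per parent, which is precisely what the edge count does.

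Finally I would pin down the range of validity and the initial data. The $600$-cell computations are carried out under the genericity hypothesis $i\ge 4$ (indeed $i\ge 5$ for the profiles of $G,H,K$), so~\eqref{eq:4d_seq01v} is justified for $n\ge 4$; to complete the proof I would check by direct inspection of Figures~\ref{fig:layer0to2}--\ref{fig:layer3to4}, tabulating the ten counts on levels $0$ through $4$ in Table~\ref{table:typeof_vertices4d}, that the same recurrences already hold for $n=1,2,3$ and are started by $v_1=4$ with all other level-$1$ counts equal to $0$. The main obstacle is exactly this low-level bookkeeping together with the correctness of the $600$-cell classification: one has to be sure that near the apex no interior type appears too early --- the first type-$F$ vertex only occurs on level $4$ --- and that the icosahedral and pentagonal sub-configurations used to separate the $F,G,H,K$ contributions of the vertex figure partition its $120$ vertices without overlap. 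Once the low levels are tabulated and matched with~\eqref{eq:4d_seq01v}, the induction on $n$ closes and the theorem follows.
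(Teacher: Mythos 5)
Your proposal is correct and follows essentially the same route as the paper: the same in-degree and out-degree data for the ten vertex types, read off from the growth figures and the $600$-cell vertex-figure classification, with the factors $\tfrac12,\tfrac13,\tfrac14$ coming from eliminating the multiplicity of the newly created vertices (which you phrase more precisely as a double count of the edges between consecutive levels), and the low levels settled by direct inspection. No gaps to report.
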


Moreover, let $s_n$ be the number of all the vertices in level $n$, so that $s_0=1$ and
\begin{equation}\label{eq:sn_4d}
s_{n}= a_n+b_n+c_n+d_n+ e_n+f_n+g_n+h_n+k_n+4, \qquad (n\geq1).
\end{equation}

Table \ref{table:typeof_vertices4d} shows the numbers of the different type of vertices on levels up to 10.  

\begin{table}[!htb]
	\centering \setlength{\tabcolsep}{0.3em}
	\begin{tabular}{|c||c|c|c|c|c|c|c|c|c|c|c|}
		\hline
		$n$   & 0 & 1 & 2  & 3  & 4  & 5   & 6     & 7       & 8         & 9            & 10            \\ \hline \hline
		$a_n$ & 0 & 0 & 6  & 12 & 24 & 54  & 132   & 336     & 870       & 2268         & 5928          \\ \hline
		$b_n$ & 0 & 0 & 0  & 6  & 24 & 72  & 198   & 528     & 1392      & 3654         & 9576          \\ \hline
		$c_n$ & 0 & 0 & 0  & 4  & 12 & 36  & 136   & 696     & 4512      & 33004        & 253260        \\ \hline
		$d_n$ & 0 & 0 & 0  & 0  & 12 & 96  & 708   & 5388    & 41868     & 328116       & 2579232       \\ \hline
		$e_n$ & 0 & 0 & 0  & 0  & 12 & 156 & 1428  & 11808   & 94488     & 747936       & 5899092       \\ \hline
		$f_n$ & 0 & 0 & 0  & 0  & 1  & 4   & 16    & 86      & 1111      & 70970        & 7610192       \\ \hline
		$g_n$ & 0 & 0 & 0  & 0  & 0  & 6   & 72    & 1702    & 137462    & 15061942     & 1694955086    \\ \hline
		$h_n$ & 0 & 0 & 0  & 0  & 0  & 12  & 774   & 79254   & 8862504   & 998747934    & 112617248352   \\ \hline
		$k_n$ & 0 & 0 & 0  & 0  & 0  & 94  & 12228 & 1395058 & 157449038 & 17755598218  & 2002190230214 \\ \hline
		$v_n$ & 1 & 4 & 4  & 4  & 4  & 4  & 4 & 4 & 4 & 4  & 4 \\ \hline
		$s_n$ & 1 & 4 & 10 & 26 & 89 & 534 & 15696 & 1494860 & 166593249 & 18770594046  & 2116518790936 \\ \hline
	\end{tabular}
	\caption{Number of types of the vertices $n\leq10$}
	\label{table:typeof_vertices4d}
\end{table}

\begin{theorem}\label{th:4d_numvertex4q}  
	The sequences $\{a_n\}$, $\ldots$, $\{k_n\}$  and $\{s_n\}$ can be described by the same ninth order linear homogeneous recurrence sequence 
	\begin{multline}\label{eq:4d_recur01}
	x_n=128x_{n-1}-1795x_{n-2}+8837x_{n-3}-19239x_{n-4}+19239x_{n-5}\\
	-8837x_{n-6}+1795x_{n-7}-128x_{n-8}+x_{n-9}
	,\qquad (n\ge10),
	\end{multline}
	the initial values are in Table \ref{table:typeof_vertices4d}.
	The sequences $\{a_n\}$, $\ldots$, $\{e_n\}$  can also be described by  
	\begin{equation}\label{eq:4d_recur_a-e}
	x_n=12x_{n-1}-37x_{n-2}+37x_{n-3}-12x_{n-4}+x_{n-5},\qquad (n\ge6),
	\end{equation} and sequences $\{a_n\}$, $\{b_n\}$ satisfy the equation 
	\begin{equation}
	x_n=4x_{n-1}-4x_{n-2}+x_{n-3},\qquad (n\ge4).
	\end{equation}
\end{theorem}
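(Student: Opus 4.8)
The plan is to recast the whole system \eqref{eq:4d_seq01v} as a single first-order vector recurrence and then read the scalar recurrences off the minimal polynomial of its coefficient matrix. Set $\mathbf{x}_n=(v_n,a_n,b_n,c_n,d_n,e_n,f_n,g_n,h_n,k_n)^{\mathsf T}$, so that \eqref{eq:4d_seq01v} becomes $\mathbf{x}_{n+1}=M\mathbf{x}_n$ $(n\ge1)$ for the $10\times 10$ matrix $M$ of the normalised coefficients; then $\mathbf{x}_n=M^{\,n-1}\mathbf{x}_1$ with $\mathbf{x}_1=(4,0,\dots,0)^{\mathsf T}$, and since $v_n\equiv 4$ for $n\ge1$ also $s_n=\mathbf 1^{\mathsf T}\mathbf{x}_n$. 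The decisive observation — which reflects the geometry: the growth on the $2$-faces involves only the types $1,A,B$, on the $3$-faces additionally $C,D,E$, and the interior additionally $F,G,H,K$ — is that with the variables in the order $(v,a,b\mid c,d,e\mid f,g,h,k)$ the matrix $M$ is block lower triangular,
\[
M=\begin{pmatrix} N & 0 & 0\\[1mm] \ast & Q & 0\\[1mm] \ast & \ast & S\end{pmatrix},
\]
where $N$ is the $3\times3$ block on $(v,a,b)$, $Q$ the $3\times3$ block on $(c,d,e)$, and $S$ the $4\times 4$ block on $(f,g,h,k)$. Hence for every polynomial $p$ the matrix $p(M)$ is again block lower triangular with diagonal blocks $p(N),p(Q),p(S)$, and $\det(\lambda I-M)=\chi_N(\lambda)\chi_Q(\lambda)\chi_S(\lambda)$. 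A short determinant computation gives $\chi_N(\lambda)=(\lambda-1)(\lambda^2-3\lambda+1)$, $\chi_Q(\lambda)=(\lambda-1)(\lambda^2-8\lambda+1)$ and $\chi_S(\lambda)=\lambda^4-116\lambda^3+366\lambda^2-116\lambda+1$.

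The two short recurrences are then essentially free. The sub-vector $(v_n,a_n,b_n)$ obeys the closed homogeneous recurrence with matrix $N$, so Cayley–Hamilton, $\chi_N(N)=0$ with $\chi_N(\lambda)=\lambda^3-4\lambda^2+4\lambda-1$, gives $x_n=4x_{n-1}-4x_{n-2}+x_{n-3}$ for $a_n$ and $b_n$; it holds for $n\ge4$ because $\mathbf{x}_{n+1}=M\mathbf{x}_n$ starts at $n=1$. For $(v_n,\dots,e_n)$ the matrix is $\left(\begin{smallmatrix}N&0\\ \ast&Q\end{smallmatrix}\right)$, with characteristic polynomial $\chi_N\chi_Q=(\lambda-1)^2(\lambda^2-3\lambda+1)(\lambda^2-8\lambda+1)$; the claimed order-$5$ polynomial arises by cancelling one factor $\lambda-1$, i.e. $(\lambda-1)(\lambda^2-3\lambda+1)(\lambda^2-8\lambda+1)=\lambda^5-12\lambda^4+37\lambda^3-37\lambda^2+12\lambda-1$. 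The cancellation is permitted exactly when the eigenvalue $1$ of this $6\times 6$ matrix is semisimple, i.e. has geometric multiplicity $2$; since $\ker(N-I)$ is one-dimensional, this boils down to checking that the image of a generator of $\ker(N-I)$ under the coupling block lies in $\operatorname{range}(Q-I)$ — a one-line verification. Granting it, the minimal polynomial of the $6\times6$ block has degree $5$, and Cayley–Hamilton gives the order-$5$ recurrence for $a_n,\dots,e_n$ $(n\ge6)$.

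For the order-$9$ recurrence I would prove that the minimal polynomial of $M$ is
\[
m(\lambda)=(\lambda-1)(\lambda^2-3\lambda+1)(\lambda^2-8\lambda+1)\,\chi_S(\lambda),
\]
which expands exactly to \eqref{eq:4d_recur01} and contains the order-$5$ polynomial (hence also the order-$3$ polynomial) as a factor — consistent with the nesting asserted in the theorem. The nine eigenvalues are all simple except $\lambda=1$ (algebraic multiplicity $2$): one checks $\chi_S(1)=136\ne0$ and, exploiting that $\chi_S$, $\lambda^2-3\lambda+1$ and $\lambda^2-8\lambda+1$ are palindromic — so the substitution $y=\lambda+\lambda^{-1}$ turns each into a quadratic in $y$ — that these polynomials are squarefree and pairwise coprime. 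For $\lambda=1$ one checks (as in the previous paragraph, and using that $S-I$ is invertible so the $S$-block contributes nothing new) that $\dim\ker(M-I)=2$. It follows that $m$ is indeed the minimal polynomial, so $m(M)=0$; then $m(M)\mathbf{x}_n=0$ for every $n\ge1$, whence each of $a_n,\dots,k_n$, and also $s_n=\mathbf 1^{\mathsf T}\mathbf{x}_n$, satisfies \eqref{eq:4d_recur01} for $n\ge10$. The required initial values are exactly those listed in Table~\ref{table:typeof_vertices4d}.

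The one genuinely non-mechanical step is the passage from the characteristic polynomial (degree $10$, resp.\ $6$) to the minimal polynomial (degree $9$, resp.\ $5$): one must show the repeated eigenvalue $\lambda=1$ is non-defective, equivalently $\operatorname{rank}(M-I)=8$. The cleanest route is the rank/kernel computation indicated above; a less elegant but equally valid alternative is to verify $m(M)=0$ directly — there the diagonal blocks of $m(M)$ vanish automatically because $\chi_N,\chi_Q,\chi_S$ all divide $m$, so only the two off-diagonal blocks require computation.
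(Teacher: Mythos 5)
Your proposal is correct and takes essentially the same route as the paper: both recast \eqref{eq:4d_seq01v} as a first-order vector recurrence $\mathbf{x}_{n+1}=M\mathbf{x}_n$ and obtain \eqref{eq:4d_recur01}, \eqref{eq:4d_recur_a-e} and the third-order relation from the minimal polynomials of the full coefficient matrix and of its sub-blocks. The only difference is that the paper delegates the minimal-polynomial computation to \textsc{Maple}, while you justify by hand, via the block lower-triangular structure and the semisimplicity of the eigenvalue $1$ (your rank/kernel criterion does check out numerically), why the degree drops from $10$ to $9$ (and from $6$ to $5$) -- a welcome piece of added detail, not a different method.
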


\begin{proof} According to \cite{NL_hyppyr}, as 
	\begin{equation}
	\vec{M}=\begin{pmatrix}
	1&1&0&0&0&0&0&0&0&\frac32\\ 
	1&2&0&0&0&0&0&0&0&0\\ 
	\frac23 &0&1& \frac23 &0&0&0&0&0&0\\ 
	0&1&\frac32&2&\frac52&0&0&0&0&0 \\
	0&0&3&4&6&0&0&0&0&0\\ 
	0&0&\frac14&0 &0&1&\frac12&0&0&0\\ 
	0&0&0&\frac13&0&2&2&\frac53&0&0 \\ 
	0&0&0&0&\frac12&6&6&6&6&0\\ 
	0&0&0 &0&0&94&97&101&107&0\\ 
	0&0&0&0&0&0&0&0&0&1\\ 
	\end{pmatrix}
	\end{equation}
	is the coefficients matrix of  \eqref{eq:4d_seq01v} with $rank(\vec{M})=10$, then the equation from minimal polynomial of matrix $\vec{M}$ equals to the characteristic equation of any sequence from \eqref{eq:4d_seq01v} and $s_n$. 
	The minimal polynomial of $\vec{M}$ is 
	\begin{eqnarray}\label{eq:4d_karpoly}
	p(x)&=&x^9-128x^8+1795x^7-8837x^6+19239x^5 \nonumber \\
	& & \quad \quad\quad \quad\quad\quad\quad\quad\quad\quad -19239x^4+8837x^3-1795x^2+128x-1 \nonumber \\
	&=&\left( x-1 \right)  \left( {x}^{2}-3\,x+1 \right)  
	\left( {x}^{2}-8\,x+1 \right)  
	\left( {x}^{4}-116\,{x}^{3}+366\,{x}^{2}-116\,x+1\right). 
	\end{eqnarray}
	Thus the recurrences can be describe by \eqref{eq:4d_recur01}. (The calculation was made by the help of software \textsc{Maple}.)  
	
	As $a_{n+1}$, $b_{n+1}$, $c_{n+1}$, $d_{n+1}$, $e_{n+1}$ and $v_{n+1}$ are independent from $f_n$, $g_n$, $h_n$  and $k_n$, they form a system of homogeneous recurrence equations again with matrix 
	$\vec{M}_{1}=\left( \begin{smallmatrix}
	1   & 1   & 0&0&0&\rfrac{3}{2}   \\ 
	1   & 2   & 0 & 0&0&0   \\ 
	\rfrac{2}{3}   & 0   & 1  &\rfrac{2}{3} & 0 &0  \\ 
	0  & 0   & 3 & 4&5&0   \\ 
	0  & 0   & 0 &0 & 0 &1  \\ 
	\end{smallmatrix}\right)$, where the minimal polynomial is 
	$x^5-12x^4+37x^3-37x^2+12x-1=(x-1)(x^2-8x+1)(x^2-3x+1)$. So  \eqref{eq:4d_recur_a-e} also holds. Similarly, we can reduce the degree of recursion  for  $a_{n}$ and $b_{n}$.
\end{proof}

\begin{remark}	In the Euclidean Pascal's simplex the equation system \eqref{eq:4d_seq01v} also holds with suitable initial values. In this case, there is no type vertices $B$, $D$, $E$, $G$, $H$, $K$, so $b_i=d_i=e_i=g_i=h_i=k_i=0$ for any $i$. Thus the hyperbolic Pascal simplex is not only the geometric but also the algebraic generalization of Pascal's simplex.   
\end{remark}

\begin{remark}
	The ratios of the numbers of the vertices from level to level tend to the largest eigenvalue of the matrix $\vec{M}$ (or largest solution of polynomial \eqref{eq:4d_karpoly}). Thus, the growing ratio of \hps is $\alpha_1\approx 112.763$, on the contrary it is $1$ in the Euclidean case.    
\end{remark}

\section{Sum of the values on levels in hyperbolic Pascal simplex}

The sum of the values of the elements on  level $n$ in the classical Pascal's simplex is $4^n$ (\cite{B}). In this section we determine it in case of the hyperbolic Pascal simplex. 

Denote respectively $\hat{a}_{n}$, $\hat{b}_{n}$, $\hat{c}_{n}$, $\hat{d}_{n}$,  $\hat{e}_{n}$, $\hat{f}_{n}$, $\hat{g}_{n}$, $\hat{h}_{n}$, $\hat{k}_{n}$  and $\hat{v}_{n}$, the sums of the values of  vertices type $A$, $B$, $C$, $D$, $E$, $F$, $G$, $H$, $K$  and $1$ on level $n$, and let $\hat{s}_{n}$ be the sum of all the values. 
From Figure~\ref{fig:gowing4d_edge}, \ref{fig:gowing4d_face} and \ref{fig:gowing4d_simlex} the results of Theorem \ref{th:4d_recursum} can be read directly. 
For example, all  vertices type $A$, $B$ and $1$ on level $i$ generate, respectively, two, two and three vertices type $A$ on level $i+1$ and it gives the first equation of \eqref{eq:4d_sum}.
Table \ref{table:typeof_vertices4d_sum} shows the sum of the values of the vertices on levels up to level 10.

\begin{theorem} \label{th:4d_recursum}
	If $n\geq1$, then
	\begin{equation}\label{eq:4d_sum}
	\begin{aligned}
	\hat{a}_{n+1}&= 2\hat{a}_n+2\hat{b}_n+3\hat{v}_n,\\
	\hat{b}_{n+1}&=  \hat{a}_n+2\hat{b}_n,\\
	\hat{c}_{n+1}&= 2\hat{a}_n+3\hat{c}_n+2\hat{d}_n,\\
	\hat{d}_{n+1}&= 2\hat{b}_n+3\hat{c}_n+4\hat{d}_n+5\hat{e}_n,\\
	\hat{e}_{n+1}&= 3\hat{c}_n+4\hat{d}_n+6\hat{e}_n,\\
	\hat{f}_{n+1}&= \hat{c}_n+4\hat{f}_n+2\hat{g}_n,\\
	\hat{g}_{n+1}&= \hat{d}_n+6\hat{f}_n+6\hat{g}_n+5\hat{h}_n,\\
	\hat{h}_{n+1}&= \hat{e}_n+12\hat{f}n+12\hat{g}_n+12\hat{h}_n+12\hat{k}_n,\\
	\hat{k}_{n+1}&=  94\hat{f}_n+97\hat{g}_n+101\hat{h}_n +107\hat{k}_n \\
	\hat{v}_{n+1}&=  \hat{v}_n,
	\end{aligned}
	\end{equation}
	with zero and $\hat{v}_1=4$ initial values.
\end{theorem}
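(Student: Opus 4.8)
The plan is to obtain \eqref{eq:4d_sum} as a direct consequence of the local growing analysis already carried out for Theorem~\ref{th:4d_growing_type}, the point being that when one tracks \emph{sums of labels} rather than \emph{numbers of vertices}, the multiplicity-correction factors $\tfrac12,\tfrac13,\tfrac14$ appearing in \eqref{eq:4d_seq01v} disappear. Recall from the construction that \hps is the shortest-path digraph directed away from $V_0$, so every edge runs from some level $j$ to level $j+1$, and the label of each vertex equals the sum of the labels at the tails of its incoming edges. Hence, for any vertex type $t$ and any level $i\ge 0$, writing $m_{ut}$ for the number of outgoing edges from a type-$u$ vertex to type-$t$ vertices,
\[
\hat t_{i+1}=\sum_{e\colon\ \text{level }i\to\text{level }i+1,\ \mathrm{head}(e)\ \text{of type }t}\mathrm{val}\bigl(\mathrm{tail}(e)\bigr)=\sum_{u}m_{ut}\,\hat u_i ,
\]
the sum over $u$ ranging over the ten types. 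No factor $1/k$ occurs here: if a type-$t$ vertex on level $i+1$ has $k$ incoming edges — possibly several from vertices of the same source type — its label is by definition the sum of those $k$ tail-labels, so counting each incoming edge exactly once already reconstructs each head-label exactly once. This is the exact opposite of the situation in Theorem~\ref{th:4d_growing_type}, where such a vertex is produced $k$ times and must be re-weighted by $1/k$ to be enumerated once; since $k$ depends only on $t$, that reweighting factors out cleanly, which is why \eqref{eq:4d_sum} is literally \eqref{eq:4d_seq01v} with every $\tfrac1k(\cdots)$ replaced by $(\cdots)$.

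Next I would read the numbers $m_{ut}$ off the growing diagrams, which are already established: Figure~\ref{fig:gowing4d_edge} gives them on the $2$-dimensional faces (each $1$, $A$, $B$ emits $3$, $2$, $2$ edges to new $A$-vertices and $0$, $1$, $2$ edges to new $B$-vertices, etc.), Figure~\ref{fig:gowing4d_face} on the $3$-dimensional faces, and Figure~\ref{fig:gowing4d_simlex} for the inner types — for instance a type-$F$ vertex has $4$ outgoing $F$-edges, $6$ $G$-edges, $12$ $H$-edges and $94$ $K$-edges. These $m_{ut}$ are exactly the numerators of the entries of the matrix $\vec M$ appearing in the proof of Theorem~\ref{th:4d_numvertex4q}; substituting them into the displayed identity yields \eqref{eq:4d_sum} line by line.

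It remains to fix the initial values. Level~$0$ is the single labelled vertex $V_0$ with label $1$; level~$1$ consists of its four neighbours, each of type $1$ and each labelled $1$, whence $\hat v_1=4$ and $\hat a_1=\dots=\hat k_1=0$; since $\hat v_{n+1}=\hat v_n$ we get $\hat v_n=4$ for all $n\ge1$, and the remaining equations carry the zeros consistently through the first few levels. As with Theorem~\ref{th:4d_growing_type}, the growing diagrams describe the stepping only from some level onward (the inner types do not even occur before level~$4$), so for the lowest levels one checks \eqref{eq:4d_sum} directly against Table~\ref{table:typeof_vertices4d_sum}; this is routine. The only genuine obstacle is the one already overcome in the previous section: verifying the multiplicities $m_{ut}$ via the classification of the $120$ vertices of the $600$-cell vertex figure around each vertex type. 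Since that classification has been completed there, here it suffices to invoke it and track labels instead of vertex counts.
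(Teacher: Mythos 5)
Your proof is correct and is essentially the paper's own argument: the paper likewise obtains \eqref{eq:4d_sum} by reading the outgoing-edge multiplicities directly off Figures~\ref{fig:gowing4d_edge}, \ref{fig:gowing4d_face} and \ref{fig:gowing4d_simlex}, observing (as you do, via the example of $\hat{a}_{n+1}$) that for sums of values each new vertex's label is already the sum of its incoming tails' labels, so the $\tfrac12,\tfrac13,\tfrac14$ multiplicity corrections of \eqref{eq:4d_seq01v} disappear, with the lowest levels checked against Table~\ref{table:typeof_vertices4d_sum}. One cosmetic slip: the coefficients of \eqref{eq:4d_sum} are the unreduced numerators in \eqref{eq:4d_seq01v} (i.e.\ the edge counts $m_{ut}$ from the figures), not literally the numerators of the already reduced entries of $\vec{M}$, but this does not affect the argument.
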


\begin{table}[!htb]
	\centering \setlength{\tabcolsep}{0.3em}
	\begin{tabular}{|c||c|c|c|c|c|c|c|c|c|c|c|}
		\hline
		$n$   & 0 & 1 & 2  & 3  & 4  & 5    & 6     & 7        & 8          & 9            & 10             \\ \hline\hline
		$\hat{a}_n$ & 0 & 0 & 12 & 36 & 108& 348  & 1164  & 3948     & 13452      & 45900        & 156684         \\ \hline
		$\hat{b}_n$ & 0 & 0 & 0  & 12 & 60 & 228  & 804   & 2772     & 9492       & 32436        & 110772         \\ \hline
		$\hat{c}_n$ & 0 & 0 & 0  & 24 & 144& 840  & 5808  & 48552    & 458736     & 4588008      & 46916592       \\ \hline
		$\hat{d}_n$ & 0 & 0 & 0  & 0  & 96 & 1296 & 14400 & 152592   & 1592448    & 16530384     & 171272832      \\ \hline
		$\hat{e}_n$ & 0 & 0 & 0  & 0  & 72 & 1248 & 15192 & 166176   & 1753080    & 18264480     & 189472440      \\ \hline
		$\hat{f}_n$ & 0 & 0 & 0  & 0  & 24 & 240  & 2280  & 26880    & 667944     & 51411168     & 5797305000     \\ \hline
		$\hat{g}_n$ & 0 & 0 & 0  & 0  & 0  & 240  & 5976  & 255936   & 24140328   & 2793536160   & 331243298952   \\ \hline
		$\hat{h}_n$ & 0 & 0 & 0  & 0  & 0  & 360  & 38400 & 4458168  & 528618816  & 62831416920  & 7469847072960  \\ \hline
		$\hat{k}_n$ & 0 & 0 & 0  & 0  & 0  & 2256 & 323592& 39296736 & 4682378232 & 556809369792 & 66200381333976 \\ \hline
		$\hat{s}_n$ & 1 & 4 & 16 & 76 & 508& 7060 & 407620& 44411764 & 5239632532 & 622525195252 & 74007676940212 \\ \hline
	\end{tabular}
	\caption{Sum of values of vertices $n\leq10$}
	\label{table:typeof_vertices4d_sum}
\end{table}

\begin{theorem}\label{th:4d_sumvertex4q}  
	The sequences $\{\hat{a}_n\}$, $\ldots$, $\{\hat{k}_n\}$  and $\{\hat{s}_n\}$ can be described by the same tenth order linear homogeneous recurrence sequence 
	\begin{multline}\label{eq:4d_sumrecurcde}
	\hat{x}_n=147\hat{x}_{n-1}-3635\hat{x}_{n-2}+36277\hat{x}_{n-3}-175292\hat{x}_{n-4}+445156\hat{x}_{n-5}
	-608920\hat{x}_{n-6}\\
	+438532\hat{x}_{n-7}-151320\hat{x}_{n-8}+19344\hat{x}_{n-9}-288\hat{x}_{n-10}
	,\qquad (n\ge11),
	\end{multline}
	the initial values are in Table \ref{table:typeof_vertices4d_sum}.
	The sequences $\{\hat{a}_n\}$, $\ldots$, $\{\hat{e}_n\}$  can also be  described by  
	\begin{equation}
	\hat{x}_n = 18  \hat{x}_{n-1}-99\hat{x}_{n-2}+226\hat{x}_{n-3} -224\hat{x}_{n-4}+92\hat{x}_{n-5}-12\hat{x}_{n-6},\qquad (n\ge7),
	\end{equation}
	and sequences $\{\hat{a}_n\}$, $\{\hat{b}_n\}$  can  also be  formed by 
	\begin{equation}
	\hat{x}_n = 5\hat{x}_{n-1}-6\hat{x}_{n-2} +2\hat{x}_{n-3},\qquad (n\ge4).
	\end{equation}
\end{theorem}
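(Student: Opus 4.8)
The proof is modelled on that of Theorem~\ref{th:4d_numvertex4q}. Gather the ten value-sums into the state vector
\[
\hat{\vec x}_n=\bigl(\hat a_n,\hat b_n,\hat c_n,\hat d_n,\hat e_n,\hat f_n,\hat g_n,\hat h_n,\hat k_n,\hat v_n\bigr)^{\mathsf T}.
\]
By Theorem~\ref{th:4d_recursum} it evolves as $\hat{\vec x}_{n+1}=\hat{\vec M}\,\hat{\vec x}_n$ for $n\ge1$, where $\hat{\vec M}$ is the coefficient matrix of \eqref{eq:4d_sum} — the analogue of the matrix $\vec M$ of Theorem~\ref{th:4d_numvertex4q}, but with the de-duplication fractions $\tfrac12,\tfrac13,\tfrac14$ absent, since here one sums values instead of counting distinct vertices. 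Also $\hat s_n=\vec{1}^{\mathsf T}\hat{\vec x}_n$ with $\vec{1}=(1,\dots,1)^{\mathsf T}$, directly from the definition of $\hat s_n$. Let $q$ be the minimal polynomial of $\hat{\vec M}$. Since $q(\hat{\vec M})=\vec 0$ and $\hat{\vec x}_n=\hat{\vec M}^{\,n-1}\hat{\vec x}_1$ for $n\ge1$, every scalar sequence $n\mapsto \vec{u}^{\mathsf T}\hat{\vec x}_n$ is annihilated by $q$, i.e.\ satisfies the linear homogeneous recurrence whose characteristic polynomial is $q$; choosing $\vec u$ among the standard basis vectors yields the recurrences for $\hat a_n,\dots,\hat k_n$ (and $\hat v_n$), and $\vec u=\vec 1$ yields the one for $\hat s_n$. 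Because $\hat{\vec x}_{n+1}=\hat{\vec M}\hat{\vec x}_n$ is guaranteed only for $n\ge1$, a recurrence obtained this way of order $d=\deg q$ is valid from index $n=d+1$ on, which matches the index ranges in the statement.

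It remains to compute $q$ and to check that $\deg q=10$. Reordering the unknowns as $\bigl(\hat v;\ \hat a,\hat b;\ \hat c,\hat d,\hat e;\ \hat f,\hat g,\hat h,\hat k\bigr)$ puts $\hat{\vec M}$ in block lower-triangular form with diagonal blocks
\[
\bigl[\,1\,\bigr],\qquad
\begin{pmatrix}2&2\\1&2\end{pmatrix},\qquad
\begin{pmatrix}3&2&0\\3&4&5\\3&4&6\end{pmatrix},\qquad
\begin{pmatrix}4&2&0&0\\6&6&5&0\\12&12&12&12\\94&97&101&107\end{pmatrix},
\]
so that $\det(xI-\hat{\vec M})=(x-1)(x^2-4x+2)(x^3-13x^2+28x-6)\,\chi(x)$, where $\chi$ is the degree-$4$ characteristic polynomial of the last block. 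A short computation (e.g.\ with \textsc{Maple}, as in Theorem~\ref{th:4d_numvertex4q}) shows that these four polynomials are separable and pairwise coprime; hence each diagonal block is non-derogatory, $\hat{\vec M}$ is similar to their direct sum, and therefore $q(x)=\det(xI-\hat{\vec M})$ has degree $10$. Expanding the product gives exactly \eqref{eq:4d_sumrecurcde}. This is the one point where the value-matrix is better behaved than the count-matrix $\vec M$ of Theorem~\ref{th:4d_numvertex4q}, whose eigenvalue $1$ occurred with multiplicity two and thereby shortened the common recurrence to order $9$.

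The two shorter recurrences come from invariant coordinate subspaces of $\hat{\vec M}$. Inspection of \eqref{eq:4d_sum} shows that $\hat a_{n+1},\dots,\hat e_{n+1},\hat v_{n+1}$ depend only on $\hat a_n,\dots,\hat e_n,\hat v_n$, so these six coordinates form a closed subsystem with matrix $\hat{\vec M}_1$ whose characteristic polynomial is $(x-1)(x^2-4x+2)(x^3-13x^2+28x-6)=x^6-18x^5+99x^4-226x^3+224x^2-92x+12$; by the same coprimality argument $\hat{\vec M}_1$ is non-derogatory, so this is its minimal polynomial, and it yields the stated order-$6$ recurrence for $\hat a_n,\dots,\hat e_n$. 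Restricting once more to the closed subsystem in $(\hat a,\hat b,\hat v)$ gives a $3\times3$ matrix with characteristic polynomial $(x-1)(x^2-4x+2)=x^3-5x^2+6x-2$, whence the order-$3$ recurrence for $\hat a_n,\hat b_n$.

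The only genuine obstacle is the bookkeeping that confirms $\deg q=10$ — i.e.\ that the four diagonal blocks have pairwise-disjoint, simple spectra so that no cancellation drops the order below $10$ — together with the routine, machine-assisted expansion of the three characteristic polynomials into the coefficient form displayed in \eqref{eq:4d_sumrecurcde} and its two reductions. Everything else is a verbatim repetition of the linear-algebra template of Theorem~\ref{th:4d_numvertex4q}.
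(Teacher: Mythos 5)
Your proposal is correct and follows essentially the same route as the paper: the paper proves this theorem ``step by step'' as Theorem~\ref{th:4d_numvertex4q}, i.e.\ it takes the (now fraction-free) coefficient matrix of \eqref{eq:4d_sum}, computes its minimal polynomial (which it notes coincides with the characteristic polynomial, factored as in \eqref{eq:4d_karpoly_sum}), and reduces to the closed subsystems for $\hat{a}_n,\ldots,\hat{e}_n,\hat{v}_n$ and for $\hat{a}_n,\hat{b}_n,\hat{v}_n$ to get the shorter recurrences. Your block-triangular/coprimality argument merely supplies an explicit justification for the paper's asserted equality of minimal and characteristic polynomial, so it is a more detailed write-up of the same proof rather than a different one.
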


The proof of this theorem is the same  step by step as the proof of Theorem~\ref{th:4d_numvertex4q}. 
The factorised minimal polynomial (now it is the same as the  characteristic polynomial) of the  coefficients matrix of  \eqref{eq:4d_sum} is 

\begin{eqnarray} \label{eq:4d_karpoly_sum}
p(x)&=&x^{10}-147x^9+3635x^8-36277x^7+175292x^6-445156x^5  \\
&& \quad\quad\quad\quad\quad\quad +608920x^4-438532x^3+151320x^2-19344x+288 \nonumber \\
&=&
\left( x-1 \right)  \left( {x}^{2}-4\,x+2 \right)  
\left( {x}^{3}-13 \,{x}^{2}+28\,x-6 \right) \nonumber\\
&& \quad\quad\quad\quad\quad\quad\quad\quad\quad\quad
\cdot \left( {x}^{4}-129\,{x}^{3}+1214\,{x}^{2}-1428\,x+24 \right).\nonumber
\end{eqnarray}

\begin{remark}
	The growing ratio of  values of \hps is $\approx\!118.89$, while it is $4$ in Euclidean case.    
\end{remark}

\end{document}